\newcommand{\Prob}{\mathbb{P}}
\newcommand{\R}{\mathbb{R}}
\newcommand{\F}{\mathscr{F}}
\newcommand{\E}{\mathbb{E}}
\newcommand{\KL}{\text{D}_{\text{KL}}}
\newcommand{\ud}{\,\mathrm{d}}
\newcommand{\QV}[2]{\left\langle #1, #2\right\rangle}
\newcommand{\BAR}[1]{\overline{#1}}
\newcommand{\REG}{\text{reg}}
\newcommand{\red}{\text{red}}
\newcommand{\transpose}{^{\operatorname{T}}}
\newtheorem{theorem}{Theorem}
\newtheorem{lemma}{Lemma}
\newtheorem{prop}{Proposition}
\newtheorem{remark}{Remark}
\newtheorem{assu}{Assumption}
\newtheorem{cor}{Corollary}
\title{\LARGE \bf
Integrating Sequential Hypothesis Testing into Adversarial Games: A Sun Zi-Inspired Framework
}
\author{Haosheng Zhou, Daniel Ralston, Xu Yang and Ruimeng Hu
\thanks{This work was partially supported by the ONR grant under \#N00014-24-1-2432, the Simons Foundation (MP-TSM-00002783) and the NSF grant DMS-2109116, DMS-2420988.}
\thanks{H. Zhou is with the Department of Statistics and Applied Probability, University of California, Santa Barbara, CA 93106, USA
        {\tt\small hzhou593@ucsb.edu}}%
\thanks{D. Ralston is with the Department of Mathematics, University of California, Santa Barbara, CA 93106, USA
        {\tt\small danielralston@ucsb.edu}}%
\thanks{X. Yang is with the Department of Mathematics, University of California, Santa Barbara, CA 93106, USA
        {\tt\small xy6@ucsb.edu}}%
\thanks{R. Hu is with the Department of Mathematics, and Department of Statistics and Applied Probability, University of California, Santa Barbara, CA 93106, USA
        {\tt\small rhu@ucsb.edu}}%
}
\begin{document}

\maketitle
\thispagestyle{empty}
\pagestyle{empty}

\begin{abstract}

This paper investigates the interplay between sequential hypothesis testing (SHT) and adversarial decision-making in partially observable games, focusing on the deceptive strategies of red and blue teams. Inspired by Sun Zi's \textit{The Art of War} and its emphasis on deception, we develop a novel framework to both deceive adversaries and counter their deceptive tactics. We model this interaction as a Stackelberg game where the blue team, as the follower, optimizes its controls to achieve its goals while misleading the red team into forming incorrect beliefs on its intentions. The red team, as the leader, strategically constructs and instills false beliefs through the blue team's envisioned SHT to manipulate the blue team’s behavior and reveal its true objectives. The blue team’s optimization problem balances the fulfillment of its primary objectives and the level of misdirection, while the red team coaxes the blue team into behaving consistently with its actual intentions. We derive a semi-explicit solution for the blue team’s control problem within a linear-quadratic framework, and illustrate how the red team leverages leaked information from the blue team to counteract deception. Numerical experiments validate the model, showcasing the effectiveness of deception-driven strategies in adversarial systems. These findings integrate ancient strategic insights with modern control and game theory, providing a foundation for further exploration in adversarial decision-making, such as cybersecurity, autonomous systems, and financial markets.


\end{abstract}


\section{Introduction}

Deception has long been recognized as a critical component of strategic interactions, with its roots embedded in ancient military philosophy. Sun Zi's \textit{The Art of War} emphasizes the fundamental role of deception in strategic decision-making, stating, \textit{"All warfare is based on deception"}~\cite{zi2007art}.
In modern contexts, the concept of deception extends far beyond traditional battlefields, finding relevance in areas such as cybersecurity \cite{aggarwal2016cyber}, autonomous systems \cite{arkin2011moral}, and financial markets (\cite{gerschlager2005deception,back2000imperfect}).
In these domains, adversarial decision-making plays a crucial role, where opposing agents employ strategies to outmaneuver and mislead each other (e.g., \cite{yager2008knowledge}). 
Despite the prevalence of deception, mathematically modeling deceptive interactions is challenging.
Unlike well established optimization (collaborative games) and fixed point problems (competitive games) commonly seen in the control literature, deception is not associated with a certain type of mathematical problems. This provides extra flexibility, but also results in different modeling approaches from various perspectives.
In this paper, we model deceptive interactions through a red team-blue team setting (e.g., \cite{rajendran2011blue}), with a particular focus on the interplay between sequential hypothesis testing and stochastic control. 

In our framework, the blue team pursues its primary goals while anticipating the red team’s inference process and actively misleading it through strategic misdirection. In response, the red team counters by subtly planting false beliefs about the test into the blue team’s perception, carefully designed to avoid arousing suspicion, thereby coaxing it into unconsciously revealing its true objective. Inspired by Sun Zi’s principle, ``When we are near, we must make the enemy believe we are far away; when far away, we must make him believe we are near'' \cite{zi2007art}, we model the blue team’s dynamics in a velocity-position setting. 

To formalize inference, we adopt \textit{sequential hypothesis testing (SHT)}, a statistical method that evaluates hypotheses in real-time as data accumulates \cite{tartakovsky2014sequential,goodman2007adaptive,schonbrodt2017sequential} and is well-suited for adversarial settings due to its efficiency in real-time decision-making. We embed SHT into a stochastic control framework for the blue team and cast the red-blue interaction as a Stackelberg game, with the red team leading the strategy.

The contributions of this paper are threefold:

\noindent\textbf{1. Linear-Quadratic (LQ) Model for Strategic Misdirection}: We develop a model for the blue team’s deceptive strategies while accomplishing its primary task by incorporating SHT statistics into linear-quadratic control formulations. 
The proposed model handles partially observable settings -- common in adversarial systems -- without requiring filtering (\cite{bain2009fundamentals,davis1977linear}), and departs from passive robustness frameworks by enabling active deception (\cite{taskesen2024distributionally,hakobyan2024wasserstein,moon2016linear,bauso2016robust}). To our knowledge, such integration of test statistics into control design is novel.

\noindent\textbf{2. Stackelberg Game}: We formulate the adversarial interaction as a Stackelberg game, with the red team as leader and the blue team as follower. Anticipating that the blue team envisions its use of hypothesis testing and possible misdirection, the red team strategically embeds alternative beliefs to steer the blue team toward reduced deception. This alignment improves the red team’s ability to infer the blue team’s true intentions.


\noindent\textbf{3. Numerical Validation}: We derive a semi-explicit solution for the blue team and propose iterative and learning-based methods for the red team. Numerical results support the theory and demonstrate the benefits of deception-aware strategies under adversarial dynamics.

\smallskip
\noindent\textbf{Related Literature.} Adversarial interactions are often modeled as partially observable stochastic games (POSGs) (\cite{horak2019solving,ma2024sub,liu2022sample}), which effectively capture the uncertainty and incomplete information inherent in domains such as cybersecurity, where attackers and defenders rely on noisy signals, or autonomous systems, where agents operate with incomplete sensor data. 
However, POSGs typically assume zero-sum structures and rely on computationally intensive methods such as partially observable Markov decision processes (POMDPs) or belief-space planning in which agents maintain and update probability distributions over hidden states (\cite{kurniawati2009sarsop,roy2005finding,kim2019pomhdp}), with limited theoretical grounding. 
Antagonistic control frameworks (\cite{lipp2016antagonistic}) offer an alternative, modeling attacks on control systems via cost maximization, but are generally restricted to deterministic settings and requires convex constraints on the state-action space to ensure well-posedness.
Given these challenges, we take an alternative approach by employing SHT, which offers a more direct and computationally efficient framework for modeling strategic deception and inference in adversarial settings.

\section{The Linear-Quadratic Model}\label{sec:model}
In this section, we present the blue team's bi-objective optimization problem: fulfill a primary task while simultaneously executing strategic misdirection. Sec.~\ref{sec:IIA} formulates the primary task via linear-quadratic dynamics. Sec.~\ref{sec:IIB} introduces sequential hypothesis testing (SHT) and its related mathematical backgrounds, modeling strategic misdirection through its likelihood ratio statistic. Sec.~\ref{sec:IIC} integrates both objectives into a single linear-quadratic control framework, highlighting the trade-off between primary task performance and intent concealment. Using dynamic programming and an appropriate ansatz, we reduce the control problem to solving a system of ordinary differential equations (ODEs), derive a semi-explicit solution, and establish well-posedness, which offers key insights into the model.



\subsection{The Primary Task -- a Baseline Model}\label{sec:IIA}

Consider a filtered probability space \((\Omega,\F,\{\F_t\}_{t\geq 0},\mathbb{P})\) supporting independent Brownian motions \(\{B_t, W_t\}\),  with  filtration  \(\F_t = \sigma(B_s,W_s,\forall s\in[0,t])\).
The blue team's state processes \(\{V_t, Y_t\}\)  evolve under controls \(\{\alpha_t, \beta_t\}\) and are subject to Brownian noises \(\{B_t, W_t\}\):
\begin{align}
    &\ud V_t = \alpha_t\ud t + \sigma_B\ud B_t, \label{eqn:velocity}\\
    &\ud Y_t = (V_t + \beta_t)\ud t + \sigma_W\ud W_t.\label{eqn:position}
\end{align}
Here, \(\sigma_B, \sigma_W>0\) represent the volatility parameters governing the system dynamics. The initial conditions $V_0, Y_0 \in L^2(\Omega)$ are square-integrable random variables.
Without loss of generality, both the state and control processes are assumed to take values in $\R$.

The primary task over horizon $[0,T]$ is
\begin{equation}
    J^{\text{primary}}(\alpha,\beta) := \E\Big[\int_0^T r(t,V_t,Y_t,\alpha_t,\beta_t)\ud t + g(V_T,Y_T)\Big],
    \label{eqn:primary_cost}
\end{equation}
where states \(v, y\in\R\) and controls \(\alpha, \beta\in\R\) define the running and terminal costs:
\begin{align}
    \label{eqn:running_cost}
    r(t,v,y,\alpha,\beta) &= \frac{r_\alpha}{2}\alpha^2 + \frac{r_\beta}{2}\beta^2 + \frac{r_v}{2}(v-\overline{v}(t))^2,\\
    \label{eqn:terminal_cost}
    g(v,y) &= \frac{t_v}{2}(v-\overline{v}_T)^2.
\end{align}
The parameters \(r_\alpha\), \(r_\beta\), \(r_v\), \(t_v\) are strictly positive, and \(\BAR{v}_T \in \R\). The function \(\BAR{v}:[0,T]\to \R\) is given and continuous.


Regarding the information set on which the blue team bases its decisions, we consider Markovian control \cite{pham2009continuous}, meaning that the control processes follow the feedback form: 
\begin{equation}\label{eq:feedback}
    \alpha_t = \phi^\alpha(t,V_t,Y_t), \quad \beta_t = \phi^\beta(t,V_t,Y_t),
\end{equation}
where \(\phi^\alpha,\phi^\beta:[0,T]\times \R\times\R\to \R\) are Borel-measurable feedback functions. 

\begin{remark}[Model interpretation]\label{rem:model_interp}
    For the blue team's primary task \eqref{eqn:velocity}--\eqref{eqn:terminal_cost}, \eqref{eqn:velocity} models velocity $V_t$, controlled by acceleration \(\alpha_t\). Equation~\eqref{eqn:position} governs position  $Y_t$, with \(\beta\) providing (in addition to $V_t$) instantaneous velocity adjustment. The control \(\beta\) plays a crucial role in the strategic misdirection discussed later. The cost functionals~\eqref{eqn:running_cost}--\eqref{eqn:terminal_cost} reflect the blue team's primary objectives of keeping $V_t$ near the reference path \(\BAR{v}(t)\), reaching target velocity \(\BAR{v}_T\) at time $T$, and minimizing the control effort \(\frac{r_\alpha}{2}\alpha^2 + \frac{r_\beta}{2}\beta^2\).

    As shown later in Corollary~\ref{cor:baseline}, the optimal control for $J^{\text{primary}}$ is \(\hat{\beta} \equiv 0\): the blue team has no incentive to change its velocity instantaneously unless it expects the red team to engage in SHT. This insight is central to the formulation \eqref{eqn:hypotheses} of $H_0$ in Sec.~\ref{sec:IIB}.
    
    
\end{remark}

\subsection{Intention Inference with SHT in a Partially Observable Environment}\label{sec:IIB}

With the red team, a potential adversary, attempting to infer its intentions, the blue team adopts deceptive actions, modeled by $\beta_t$ in \eqref{eqn:position}. We first describe the partially observable setting and the red team's information set, then formulate how this leads to a secondary task that determines the optimal $\hat{\beta}_t$. Importantly, \textit{the red team considered here is the version perceived by the blue team, i.e., its belief about the red team’s knowledge and behavior.} The actual red team's information set will be discussed in Sec.~\ref{sec:game}.

From the blue team’s perspective, the red team knows the state dynamics~\eqref{eqn:velocity}--\eqref{eqn:position}, but not the primary task~\eqref{eqn:running_cost}--\eqref{eqn:terminal_cost}. The red team can observe sample paths of \(\{Y_t\}\), but cannot identify the underlying sample point \(\omega\in\Omega\) that corresponds to the currently observed trajectory of \(\{Y_t\}\). 
Hence, the sample paths of \(\{B_t,W_t,V_t\}\) remain unobservable to the red team, leading to a partially observable setting. 

Believing that the red team will attempt to infer intentions from limited observations, the blue team forms its own belief about the red team’s inference process, modeled by SHT. To anticipate the red team’s potential conclusions, the blue team envisions itself in the red team’s position, introduces a secondary task and selects $\beta_t$ strategically to reduce the likelihood of a conclusive or unfavorable inference by the red team.
For tractability, we assume linear feedback controls, motivated by the LQ structure of \eqref{eqn:velocity}--\eqref{eqn:terminal_cost}.

\begin{assu}[Control]
    \label{assu:linear_ctrl}
    Assume the feedback functions in~\eqref{eq:feedback} are linear in the state variables:
    \begin{align}
        \phi^\alpha(t, v, y) &= b_\alpha(t)v + c_\alpha(t)y + d_\alpha(t),\\
        \phi^\beta(t, v, y) &= b_\beta(t)v + c_\beta(t) y + d_\beta(t),
    \end{align}
    where the coefficients belong to \(C_T:= C([0,T];\R)\), the space of continuous real-valued functions on $[0,T]$.
\end{assu}

From the blue team's perspective, SHT is conducted with the following null and alternative hypotheses:
\begin{equation}
    \label{eqn:hypotheses}
    \begin{cases}
    H_0: b_\beta \equiv 0, c_\beta \equiv 0, d_\beta \equiv 0 \\
    H_1: b_\beta \equiv 0, c_\beta = f_c, d_\beta = f_d 
    \end{cases},
\end{equation}
where \(f_c, f_d  \in C_T\). By choosing \(f_c, f_d\), the blue team can shape patterns that mislead the red team. Rejecting \(H_0\) means a significant likelihood to engage in strategic misdirection.

\begin{remark}
\label{rem:hyp_interp}

 The red team assumes \(b_\beta \equiv 0\) due to its inability to observe the sample paths of \(\{V_t\}\). The null hypothesis \(H_0\) corresponds to the baseline case where $\hat\beta \equiv 0$, as discussed later in Corollary~\ref{cor:baseline}. Under either hypothesis $H_0$ or $H_1$, \(\E V_t^2, \E Y_t^2<\infty\) for \(t\in[0,T]\), with \(\{V_t, Y_t\}\) having almost surely (a.s.) continuous paths.  Extensions such as nonlinear dependence or inference on $\phi^\alpha$ are natural but beyond this work, thus are left for future research.


\end{remark}

Statistically, test~\eqref{eqn:hypotheses} is a simple {\it vs.} simple test, with parameter space \((C_T)^{3}\), where \(H_0\) and \(H_1\) correspond to \(\Theta_0:= \{(0,0,0)\}\) and \(\Theta_1:= \{(0,f_c,f_d)\}\).
Given this structure, the sequential probability ratio test (SPRT) (\cite{tartakovsky2014sequential,goodman2007adaptive,schonbrodt2017sequential}) is the natural choice, as it is the most powerful sequential test, as indicated by Neyman–Pearson-type results \cite{wald1948optimum}. 
In this context, SHT refers specifically to SPRT.

To construct the test statistic, let \((C_T,\mathscr{B}_T)\) be a measurable space with $\sigma$-field \(\mathscr{B}_T\).
For stochastic processes \(\{\eta_t,\xi_t\}\) with a.s. continuous paths, denote their laws by $\mu_\eta,\mu_\xi$, given by \(\mu_\eta(B) := \Prob(\eta\in B)\), \(\forall B\in\mathscr{B}_T\). Whenever \(\mu_\eta\) is absolutely continuous with respect to \(\mu_\xi\), the Radon-Nikodym derivative \(\frac{\ud \mu_\eta}{\ud \mu_\xi}:C_T\to\R_+\) is well-defined, and serves as the (likelihood ratio) statistic of SHT.

\begin{prop}
    \label{prop:LR_calc}
    Denote by \(\mu^{H_0}_{(V,Y)}\) and \(\mu^{H_1}_{(V,Y)}\) the law of \(\{V_t,Y_t\}\) under \(H_0\) and \(H_1\) respectively.
    Under Assumption~\ref{assu:linear_ctrl}, the SHT statistic is given by:
    \begin{align}
        \label{eqn:L_t}
        L_T(V,Y) :&= \frac{\ud \mu^{H_1}_{(V,Y)}}{\ud \mu^{H_0}_{(V,Y)}}(V,Y)\notag\\
        &= \text{exp}\Bigg\{\frac{1}{\sigma_W^2}\Bigg[
        \int_0^T \left(f_c(t)Y_t + f_d(t)\right)\ud Y_t\notag\\
        &- \int_0^T V_t\left(f_c(t)Y_t + f_d(t)\right)\ud t \notag\\
        &- \frac{1}{2}\int_0^T \left(f_c(t) Y_t + f_d(t)\right)^2 \ud t    \Bigg]\Bigg\},
    \end{align}
    where \((V,Y)\) are random processes that follow the dynamics~\eqref{eqn:velocity}--\eqref{eqn:position} under \(H_0\). 
\end{prop}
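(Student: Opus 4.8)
The plan is to recognize that the statistic is the Radon--Nikodym derivative between two diffusion-type laws that differ only through the drift of the $Y$-component, and to obtain it from the Girsanov-type likelihood-ratio formula for diffusions (e.g. \cite[Section~7.6]{liptser2013statistics}). First I would write the pair $(V_t,Y_t)$ as a single $\R^2$-valued diffusion $X_t=(V_t,Y_t)\transpose$ driven by $(B_t,W_t)\transpose$ with the constant, invertible diffusion matrix $b=\mathrm{diag}(\sigma_B,\sigma_W)$. Under $H_0$ the drift is $A_t(X)=(\phi^\alpha(t,V_t,Y_t),\,V_t)\transpose$, while under $H_1$ the only change is the added term $f_c(t)Y_t+f_d(t)$ in the $Y$-equation, so $a_t(X)=(\phi^\alpha(t,V_t,Y_t),\,V_t+f_c(t)Y_t+f_d(t))\transpose$. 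Crucially the $V$-drift is unchanged (both hypotheses impose $b_\beta\equiv 0$), so the drift difference is supported on the second coordinate: $A_t-a_t=(0,\,-(f_c(t)Y_t+f_d(t)))\transpose$. To match the orientation of the cited formula I would take $\mu_\xi=\mu^{H_0}_{(V,Y)}$ as the reference law (drift $A_t$) and $\mu_\eta=\mu^{H_1}_{(V,Y)}$ as the target (drift $a_t$), evaluating the density along the $H_0$-path.

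Second, I would verify the hypotheses needed for the formula. Since $\phi^\alpha$ is affine in $(v,y)$ by Assumption~\ref{assu:linear_ctrl} and $f_c,f_d\in C_T$, the $H_1$-drift $a_t$ is affine with continuous (hence bounded on $[0,T]$) coefficients, giving existence and uniqueness of a strong solution, with equal initial data $(V_0,Y_0)\in L^2(\Omega)$. The diffusion matrix $b$ is constant and invertible, so $bb\transpose=\mathrm{diag}(\sigma_B^2,\sigma_W^2)$, $[bb\transpose]^\dagger=\mathrm{diag}(\sigma_B^{-2},\sigma_W^{-2})$, and the compatibility equation $b\,\alpha_t=A_t-a_t$ is solved by $\alpha_t=(0,-(f_c Y_t+f_d)/\sigma_W)\transpose$; the quadratic-integrability conditions hold because the affine drifts have finite second moments ($\E V_t^2,\E Y_t^2<\infty$ with a.s.\ continuous paths, per Remark~\ref{rem:hyp_interp}). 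These checks deliver equivalence of $\mu^{H_0}_{(V,Y)}$ and $\mu^{H_1}_{(V,Y)}$.

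Third, I would substitute into the formula. Because $[bb\transpose]^\dagger$ is diagonal and $A_t-a_t$ lives on the $Y$-coordinate, every term collapses to a scalar expression weighted by $\sigma_W^{-2}$, and the $\sigma_B$-block drops out entirely. The stochastic-integral term $-\int_0^T(A_t-a_t)\transpose[bb\transpose]^\dagger\,\ud X_t$ becomes $\sigma_W^{-2}\int_0^T(f_c Y_t+f_d)\,\ud Y_t$, and the Lebesgue term $\tfrac12\int_0^T(A_t-a_t)\transpose[bb\transpose]^\dagger(A_t+a_t)\,\ud t$, whose relevant factor $A_t+a_t$ has second coordinate $2V_t+f_c Y_t+f_d$, expands to $-\sigma_W^{-2}\int_0^T V_t(f_c Y_t+f_d)\,\ud t-\tfrac{1}{2\sigma_W^2}\int_0^T(f_c Y_t+f_d)^2\,\ud t$. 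Collecting the three pieces inside one exponential reproduces~\eqref{eqn:L_t}.

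The computation is routine once the set-up is fixed; the points requiring care are bookkeeping ones — keeping the sign and orientation convention consistent (which law is the reference $\mu_\xi$, and that the density is evaluated along $H_0$), and confirming that the drift perturbation lies in the range of $b$ so the two laws are genuinely equivalent rather than merely absolutely continuous. I would also note that the argument is a clean instance of Girsanov's theorem: since only the $Y$-drift is shifted, the change of measure charges solely the $W$-Brownian motion, which is exactly why the resulting statistic depends only on $\sigma_W$ and on the observable increments $\ud Y_t$. Working on path space via \cite{liptser2013statistics} is slightly preferable to raw Girsanov here, because the resulting density is automatically a functional of the paths $(V,Y)$ (indeed $\F_T$-measurable and $\sigma(V,Y)$-measurable at once), so it coincides with the likelihood ratio of the induced laws without a further conditioning step.
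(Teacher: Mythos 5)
Your proposal is correct and follows essentially the same route as the paper: the published proof is a one-line appeal to the likelihood-ratio formula for diffusion-type processes in \cite[Sec.~7.6]{liptser2013statistics} (with details deferred to \cite{zhou2025adversarial}), and your argument is exactly that application spelled out, with the drift difference supported on the $Y$-coordinate and the conditions for equivalence of the two laws duly verified. The sign conventions, the choice of $\mu^{H_0}_{(V,Y)}$ as reference measure, and the expansion of the quadratic term all check out against~\eqref{eqn:L_t}.
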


\begin{proof}
Using dynamics~\eqref{eqn:velocity}--\eqref{eqn:position} and interpreting \(\{V_t,Y_t\}\) as SDEs with different drifts under $H_0$ and $H_1$, the SHT statistic $L_T$ follows directly from \cite[Sec.~7.6]{liptser2013statistics}. 
For a more detailed proof, refer to \cite[Lemma~1 \& Proposition~1]{zhou2025adversarial}.
\end{proof}

The SHT statistic \(L_T(V,Y)\) is measurable with respect to the $\sigma$-field \(\sigma(V_t,Y_t,\forall t\in[0,T])\). Though $V$ is unobservable to the red team, it is available to the blue team, who uses it to plan misdirection. 
A large $L_T$ favors rejecting $H_0$. In practice, given sample paths $(\tilde{v},\tilde{y})$, the test statistic shall be evaluated as  $L_T(\tilde{v},\tilde{y})$.

\subsection{Linear-Quadratic Model for Strategic Misdirection}\label{sec:IIC}

With the SHT (likelihood ratio) statistic $L_T$ established, we now introduce the blue team’s bi-objective optimization problem, integrating the primary task from Sec.~\ref{sec:IIA}. We derive a semi-explicit solution for the optimal strategy by reducing the problem to a system of ODEs and proving global existence and uniqueness of the solution.

In the blue team’s perspective, the envisioned red team conducts SHT to detect any perturbations in the sample paths of \(\{Y_t\}\), aiming to infer its true intentions. Anticipating this, the blue team deliberately perturbs $Y_t$, even at the expense of partially sacrificing its primary task. Based on the interpretation of $L_T$ and Proposition~\ref{prop:LR_calc}, the blue team maximizes \(\log L_T\),  leading it to add the term \(-\E \log L_T\) to its primary cost~\eqref{eqn:primary_cost}, as calculated in Proposition~\ref{prop:E_log_L}.

\begin{prop}
    \label{prop:E_log_L}
    Under Assumption~\ref{assu:linear_ctrl}, the expected log-likelihood ratio statistic, evaluated at the empirically observed trajectories \((V,Y)\), is given by:
    \begin{multline}
        \label{eqn:E_log_Lt}
        \E \log L_T = \frac{1}{\sigma_W^2}\E\int_0^T \Big[(f_c(t)Y_t  + f_d(t))\beta_t \\- \frac{1}{2}(f_c(t)Y_t +f_d(t))^2 \Big]\ud t.
    \end{multline} 
\end{prop}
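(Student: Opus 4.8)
The plan is to start from the explicit expression for $L_T$ in Proposition~\ref{prop:LR_calc}, take its logarithm, and then compute the expectation under $H_0$ dynamics. Taking the log of \eqref{eqn:L_t} immediately gives
\[
\log L_T = \frac{1}{\sigma_W^2}\left[\int_0^T (f_c(t)Y_t + f_d(t))\ud Y_t - \int_0^T V_t(f_c(t)Y_t + f_d(t))\ud t - \frac12\int_0^T (f_c(t)Y_t + f_d(t))^2\ud t\right].
\]
The second and third integrals are ordinary Lebesgue integrals, so the only delicate term is the stochastic integral $\int_0^T (f_c(t)Y_t + f_d(t))\ud Y_t$, which I would handle first.

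The key step is to substitute the dynamics of $Y_t$. Here I need to be careful about which dynamics govern the \emph{empirically observed} trajectory: since the blue team actually deploys the control $\beta_t$, the observed path follows \eqref{eqn:position}, namely $\ud Y_t = (V_t + \beta_t)\ud t + \sigma_W\ud W_t$, even though $L_T$ was derived treating $(V,Y)$ as the $H_0$ process. Plugging this in, the stochastic integral splits as
\[
\int_0^T (f_c(t)Y_t + f_d(t))(V_t + \beta_t)\ud t + \sigma_W\int_0^T (f_c(t)Y_t + f_d(t))\ud W_t.
\]
The $V_t$ piece here cancels exactly against the second integral in $\log L_T$, which is the clean algebraic simplification that makes the final formula collapse to the stated form involving only $\beta_t$ and the quadratic penalty.

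Next I would take expectations. The crucial observation is that the It\^o integral $\int_0^T (f_c(t)Y_t + f_d(t))\ud W_t$ is a martingale with zero expectation, so it drops out under $\E$. Justifying this requires checking the integrand is square-integrable, i.e. $\E\int_0^T (f_c(t)Y_t + f_d(t))^2\ud t < \infty$; this follows from continuity of $f_c, f_d$ on the compact interval $[0,T]$ together with the moment bound $\E Y_t^2 < \infty$ noted in Remark~\ref{rem:hyp_interp}. After the cancellation and the vanishing of the martingale term, collecting the surviving terms yields exactly \eqref{eqn:E_log_Lt}.

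The main obstacle I anticipate is conceptual rather than computational: correctly reconciling the fact that $L_T$ is defined via the likelihood ratio on the $H_0$ law, yet it must be evaluated along the realized trajectory that actually obeys the controlled dynamics \eqref{eqn:position} with $\beta_t \neq 0$. One must resist the temptation to substitute the $H_0$ dynamics $\ud Y_t = V_t\ud t + \sigma_W\ud W_t$ into the stochastic integral, since the expectation is taken "at the empirically observed trajectories" as the proposition states. Getting this substitution right is precisely what produces the $\beta_t$-dependence that the blue team subsequently optimizes over; everything after that is routine cancellation and the martingale argument.
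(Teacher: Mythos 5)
Your proposal is correct and follows essentially the same route as the paper: substitute the controlled dynamics \eqref{eqn:position} into the stochastic integral, cancel the $V_t$ terms, and kill the It\^o integral in expectation via the square-integrability of its integrand (the paper phrases this as $\E\QV{Z}{Z}_T<\infty$ for $Z_t=\int_0^t(f_c(s)Y_s+f_d(s))\ud W_s$). Your added emphasis on using the realized $\beta$-controlled dynamics rather than the $H_0$ dynamics is exactly the point the paper's substitution implicitly relies on.
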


\begin{proof}
    Define \(Z_t := \int_0^t (f_c(s)Y_s + f_d(s))\ud W_s\). Since \(\E \QV{Z}{Z}_T<\infty\), it follows that \(\{Z_t\}\) is a martingale with zero mean.
    Taking logarithm and expectation on both sides of \eqref{eqn:L_t} and substituting the dynamics~\eqref{eqn:position} yield
    \begin{multline}
        \E \log L_T = \frac{1}{\sigma_W^2} \E\Big[ \int_0^T\left(f_c(t)Y_t + f_d(t)\right)\beta_t\ud t\\
        - \frac{1}{2}\int_0^T \left(f_c(t) Y_t + f_d(t)\right)^2 \ud t + \sigma_W Z_T\Big],
    \end{multline}
    which concludes the proof.
\end{proof}

Considering both the primary task~\eqref{eqn:primary_cost} and strategic misdirection~\eqref{eqn:E_log_Lt}, the blue team now aims to minimize:
\begin{equation}\label{def:J}
    J_{\text{blue}}(\alpha,\beta) := J^{\text{primary}}(\alpha, \beta) - \lambda\E\log L_T,
\end{equation}
where \(0\leq \lambda\leq r_\beta\sigma_W^2\) measures the intensity of misdirection. The upper bound of \(\lambda\) ensures well-posedness (Theorem~\ref{thm:global_well}).

This new stochastic control problem retains the state dynamics~\eqref{eqn:velocity}--\eqref{eqn:position} and terminal cost~\eqref{eqn:terminal_cost}, but modifies the running cost (c.f. \eqref{eqn:running_cost}) into: 
\begin{multline}
    \label{eqn:mod_running_cost}
    h(t,v,y,\alpha,\beta) := r(t,v,y,\alpha,\beta)\\
    - \frac{\lambda}{\sigma^2_W}(f_c(t)y + f_d(t)) \beta + \frac{\lambda}{2\sigma_W^2}(f_c(t)y + f_d(t))^2,
\end{multline}
which incorporates the integrands from \eqref{eqn:E_log_Lt}.

The remainder of Sec.~\ref{sec:IIC} solves this LQ control problem~\eqref{def:J}. Let \(\mathcal{V}(t, v, y):[0,T]\times \R\times \R\to\R\) be the value function, representing the minimized cost when the system starts at $(V_t, Y_t) = (v, y)$. 
By dynamic programming, $\mathcal{V}$ satisfies the Hamilton-Jacobi-Bellman (HJB) equation:
\begin{multline}
    \partial_t \mathcal{V} + \inf_{\alpha,\beta}\{\alpha\partial_v \mathcal{V} + (v+\beta)\partial_y \mathcal{V} + \frac{1}{2}\sigma_B^2\partial_{vv}\mathcal{V} \\+ \frac{1}{2}\sigma_W^2\partial_{yy}\mathcal{V}  
    + h(t,v,y,\alpha,\beta)\} = 0,
    \label{eqn:HJB}
\end{multline}
with a terminal condition $
    \mathcal{V}(T,v,y) = \frac{t_v}{2}\big(v - \overline{v}_T\big)^2$.
Minimizing over $\alpha,\beta$ yields the optimal controls: 
\begin{align}
    \label{eqn:alpha_hat}
    \hat{\alpha} &=  -\frac{1}{r_\alpha}\partial_v \mathcal{V},\\
    \label{eqn:beta_hat}
    \hat{\beta} &= \frac{1}{r_\beta}\Big[ \frac{\lambda}{\sigma_W^2}\Big(f_c(t)y + f_d(t)\Big) - \partial_y \mathcal{V}\Big].
\end{align}
Adopting a quadratic ansatz, we assume
\begin{equation}
    \mathcal{V}(t,v,y) = \frac{\mu_t}{2}v^2 + \eta_tvy + \frac{\rho_t}{2}y^2 + \gamma_t v + \theta_t y +  \xi_t,
\end{equation}
where \(\mu\), \(\eta\), \(\rho\), \(\gamma\), \(\theta\), \(\xi \in C_T\) .
Plugging \eqref{eqn:alpha_hat} and~\eqref{eqn:beta_hat} back into \eqref{eqn:HJB} and collecting corresponding coefficients yields a system of ODEs:
\begin{equation}
    \label{eqn:ODE_system}
    \begin{cases}
        \dot{\mu}_t = \frac{1}{r_\alpha}\mu_t^2 + \frac{1}{r_\beta}\eta_t^2 - 2\eta_t - r_v,\\
        \dot{\eta}_t = \frac{1}{r_\alpha}\mu_t \eta_t + \frac{1}{r_\beta}\rho_t \eta_t - \rho_t - \frac{\lambda}{r_\beta \sigma_W^2}\eta_t f_c(t),\\
        \dot{\rho}_t = \frac{1}{r_\alpha}\eta_t^2 + \frac{1}{r_\beta} \rho_t^2 - \frac{2\lambda}{r_\beta \sigma_W^2}\rho_t f_c(t) + (\frac{\lambda^2}{r_\beta \sigma_W^4} - \frac{\lambda}{\sigma_W^2})f_c^2(t),\\
        \dot{\gamma}_t = \frac{1}{r_\alpha} \mu_t \gamma_t + \frac{1}{r_\beta}\eta_t \theta_t - \theta_t + r_v \overline{v}(t) - \frac{\lambda}{r_\beta \sigma_W^2}\eta_t f_d(t),\\
        \dot{\theta}_t = \frac{1}{r_\alpha}\eta_t \gamma_t + \frac{1}{r_\beta}\rho_t \theta_t - \frac{\lambda}{r_\beta \sigma_W^2}\theta_t f_c(t) - \frac{\lambda}{r_\beta \sigma_W^2}f_d(t) \rho_t \\\quad\quad + (\frac{\lambda^2}{r_\beta \sigma_W^4} - \frac{\lambda}{\sigma_W^2})f_c(t) f_d(t),\\
        \dot{\xi}_t = \frac{1}{2r_\alpha}\gamma_t^2 + \frac{1}{2r_\beta}\theta_t^2 - \frac{1}{2}\sigma_B^2 \mu_t - \frac{1}{2}\sigma_W^2 \rho_t - \frac{\lambda}{r_\beta \sigma_W^2}f_d(t) \theta_t \\
        \quad \quad- \frac{r_v[\overline{v}(t)]^2}{2} + (\frac{\lambda^2}{2 r_\beta \sigma_W^4} - \frac{\lambda}{2 \sigma_W^2})f_d^2(t),
    \end{cases}
\end{equation}
with terminal conditions $\mu_T = t_v$, $\eta_T = 0$, $\rho_T = 0$, $\gamma_T = -t_v\overline{v}_T$, $\theta_T = 0$, $\xi_T = \frac{t_v}{2}\big(\overline{v}_T\big)^2$.
The semi-explicit solution to this linear-quadratic control problem is given by
\begin{equation}
    \label{eqn:solution}
    \begin{aligned}
        \hat{\alpha}(t,v,y) = &-\frac{\mu_t}{r_\alpha} v - \frac{\eta_t}{r_\alpha} y - \frac{\gamma_t}{r_\alpha},\\
        \hat{\beta}(t,v,y) = &-\frac{\eta_t}{r_\beta} v + \Big(\frac{\lambda}{r_\beta\sigma_W^2}f_c(t) - \frac{\rho_t}{r_\beta}\Big) y \\ &+ \Big(\frac{\lambda}{r_\beta\sigma_W^2}f_d(t) - \frac{\theta_t}{r_\beta}\Big).
    \end{aligned}
\end{equation}
This solution confirms Assumption~\ref{assu:linear_ctrl} and illustrates optimal deception strategies. While $\alpha$ and $\beta$ are intended to handle primary and secondary objectives separately, the optimal solution couples them, so that $\hat\alpha$ and $\hat\beta$ differ from controls obtained if tasks were solved independently. Comparison of $\hat\beta$ with $H_1$ in \eqref{eqn:hypotheses} shows that the blue team misdirects toward patterns induced by $f_c$ and $f_d$. Philosophically, introducing perturbations is meaningful only when constrained to specific patterns -- justifying the use of a simple-vs-simple SHT rather than general information divergences. Notably, $\hat\alpha$ and $\hat\beta$ depend nonlinearly on $f_c,f_d$, since changes propagate across the coupled ODE system  \((\mu_t, \eta_t, \rho_t, \gamma_t, \theta_t)\).

We now show the well-posedness of the ODE system~\eqref{eqn:ODE_system}. Denoting \(x(t) := (\mu_t, \eta_t, \rho_t, \gamma_t, \theta_t, \xi_t)\transpose\), the system~\eqref{eqn:ODE_system} is \(\dot{x}(t) = F(t,x(t))\), where \(F:[0,T]\times \R^6\to\R^6\), and  \(x_i(t)\) is the \(i\)-th component of \(x(t)\). 



\begin{theorem}[Local Existence and Uniqueness]
    \label{thm:local_well}
    There exists \(t_0\in\R_+\) and \(m>0\) such that the solution \(x(t)\) to the ODE system~\eqref{eqn:ODE_system} exists and is unique, when \(t\in[T-t_0,T]\) and \(\|x(t) - x(T)\|_\infty\leq m\).
\end{theorem}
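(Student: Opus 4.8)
The plan is to read \eqref{eqn:ODE_system} as a terminal-value problem $\dot{x}(t) = F(t,x(t))$ with data prescribed at $t = T$, and to settle it by the Picard--Lindel\"of (Cauchy--Lipschitz) theorem after reversing time. First I would substitute $s := T - t$ and set $\tilde{x}(s) := x(T-s)$, which turns the problem into a forward initial-value problem $\tilde{x}'(s) = -F(T-s,\tilde{x}(s)) =: G(s,\tilde{x}(s))$ with initial datum $\tilde{x}(0) = x(T) = (t_v, 0, 0, -t_v\BAR{v}_T, 0, \tfrac{t_v}{2}\BAR{v}_T^2)\transpose$. Everything then reduces to verifying the two standard hypotheses for $G$ on a suitable domain.

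Next I would check those hypotheses. Inspecting the right-hand sides in \eqref{eqn:ODE_system}, each component of $F$ is a polynomial of degree at most two in the entries of $x$, whose coefficients are either the fixed positive constants $r_\alpha, r_\beta, r_v, \sigma_B, \sigma_W, \lambda$ or the given continuous functions $f_c, f_d, \BAR{v} \in C_T$. Hence $G$ is jointly continuous on $[0,T]\times\R^6$ and smooth in its second argument. Fixing any $m > 0$, I would work on the compact set $K := [0,T] \times \overline{B}$ with $\overline{B} := \{z \in \R^6 : \|z - x(T)\|_\infty \leq m\}$. Since $f_c, f_d, \BAR{v}$ are continuous on the compact interval $[0,T]$ they are bounded there, so $G$ is bounded on $K$, say $\|G(s,z)\|_\infty \leq M$; moreover the Jacobian $D_z G$ has entries that are affine in $z$ with $t$-continuous coefficients, hence is bounded on $K$, yielding a Lipschitz constant $L$ for $G$ in its second argument, uniform over $s \in [0,T]$.

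With $M$ and $L$ in hand I would run the usual contraction argument. Choosing $t_0 := \min\{m/M,\, 1/(2L)\}$, the Picard operator $(\mathcal{T}\tilde{x})(s) := x(T) + \int_0^s G(\tau, \tilde{x}(\tau))\ud\tau$ maps the complete metric space $C([0,t_0]; \overline{B})$ into itself (the bound $t_0 \leq m/M$ keeps the image inside $\overline{B}$) and is a contraction in the supremum norm (the bound $t_0 L \leq 1/2 < 1$). Banach's fixed-point theorem then gives a unique fixed point, i.e.\ a unique $C^1$ solution $\tilde{x}$ on $[0,t_0]$; undoing the time reversal produces the unique solution $x$ on $[T-t_0, T]$ obeying $\|x(t) - x(T)\|_\infty \leq m$, as claimed.

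I do not expect a genuine obstacle for this \emph{local} statement: the only subtlety is that the quadratic terms make $F$ merely locally --- not globally --- Lipschitz, which is exactly why the conclusion must be confined to the ball $\|x(t) - x(T)\|_\infty \leq m$ and to a possibly short window $[T - t_0, T]$, since the constants $M$ and $L$ grow without bound as $m \to \infty$. The real difficulty is deferred to the global statement (Theorem~\ref{thm:global_well}), where one must rule out finite-time blow-up of the Riccati-type components $(\mu,\eta,\rho)$ by exploiting the sign structure of the equations together with the bound $\lambda \leq r_\beta\sigma_W^2$, rather than any abstract fixed-point argument.
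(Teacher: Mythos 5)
Your proposal is correct and follows essentially the same route as the paper: both establish that $F$ is Lipschitz in $x$ on a compact neighborhood of $x(T)$ (the paper via direct triangle-inequality estimates on the quadratic, crossing, and linear terms; you via bounding the Jacobian) and then invoke Picard--Lindel\"of, which you merely unpack into its underlying contraction-mapping argument. The time reversal and the explicit choice of $t_0$ are details the paper leaves implicit, but they do not constitute a different approach.
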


\begin{proof}
    Since continuous functions are bounded on compact sets, $\exists M_f>0$, such that \(\ \max\{|f_c(t)|, |f_d(t)|,|\BAR{v}(t)|\}\leq M_f\), \(\forall t\in[0,T]\).
    Setting \(M:= m + \|x(T)\|_\infty\), we have \(\|x(t)\|_\infty\leq M\), \(\forall t\in[T-t_0,T]\).

    Applying the triangle inequality to \(\|F(t,x) - F(t,y)\|_1\) yields three types of terms:
    quadratic terms \(|x_i^2 - y_i^2|\leq 2M|x_i-y_i|\);
    crossing terms \(|x_ix_j - y_iy_j|\leq M|x_i-y_i| + M|x_j-y_j|\), for \(i\neq j\); and  linear terms \(|x_i-y_i|\).
    Collecting the coefficients gives 
    $\|F(t,x) - F(t,y)\|_1\leq \sum_{i=1}^6 C_i|x_i-y_i|\leq \max_i |C_i|\cdot \|x-y\|_1,$
    showing that \(F\) is Lipschitz continuous in \(x\) under the \(\ell_1\) norm \(\|\cdot\|_1\).

    Since \(F\) is continuous in \(t\) and Lipschitz in \(x\), the Picard-Lindelöf theorem \cite{teschl2012ordinary} guarantees local existence and uniqueness, concluding the proof.
\end{proof}



We recall the following lemma on the {\it a priori} bound of the solution to a matrix Riccati equation.

\begin{lemma}[{\cite[Sec.~2]{jacobson1970new}, \cite[Sec.~3]{mrtensson1971matrix}}]
\label{lem:Riccati}
Denote by \(\mathbb{S}^{n\times n}\) the collection of \(n\times n\) real symmetric matrices.
Consider the matrix Riccati equation:
\begin{equation}
    \dot{S}(t) + Q(t) + A(t)\transpose S(t) + S(t)A(t)  - S\transpose (t)R^{-1}(t)S(t)= 0,
\end{equation}
for \(S:[0,T]\to \mathbb{S}^{n\times n}\), with a given terminal condition \(S(T)\), where \(Q,R^{-1}:[0,T]\to \mathbb{S}^{n\times n}\) and \(A:[0,T]\to \mathbb{R}^{n\times n}\) are continuous matrix-valued functions. 
If the following conditions hold:
\begin{equation}
    \label{eqn:Riccati_condition}
    \begin{cases}
        Q(t) \geq 0,\ R^{-1}(t)>0,\ \forall t\in[0,T]\\
        S(T)\geq 0
    \end{cases},
\end{equation}
then the solution \(S(t)\) has an {\it a priori} bound on \([0,T]\) that only depends on the coefficients and terminal conditions of the matrix Riccati equation.
\end{lemma}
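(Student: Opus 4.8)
The plan is to avoid attacking the nonlinear Riccati ODE directly and instead recognize $S(t)$ as the value-function matrix of an auxiliary finite-horizon linear-quadratic problem, from which both a lower and an upper bound follow by elementary control-theoretic reasoning. Matching the given equation with the standard LQ Riccati equation (taking the input matrix to be the identity and the control weight to be $R$), $S(t)$ is the Riccati solution associated with
\[
    \min_{u}\ \int_t^T\big(x(s)\transpose Q(s)x(s) + u(s)\transpose R(s)u(s)\big)\ud s + x(T)\transpose S(T)x(T),
\]
subject to $\dot{x}(s) = A(s)x(s) + u(s)$ with $x(t)=x_0$. A completion-of-squares (verification) argument shows that, on any subinterval of $[0,T]$ where $S$ exists, the optimal value of this problem equals $x_0\transpose S(t)x_0$.

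The lower bound is then immediate. Since $Q(t)\geq 0$ and $R^{-1}(t)>0$ (so that $R(t)>0$), the running cost is nonnegative, and $S(T)\geq 0$ makes the terminal cost nonnegative; hence the total cost is nonnegative for every admissible $u$ and every $x_0$. Taking the infimum gives $x_0\transpose S(t)x_0\geq 0$ for all $x_0$, i.e.\ $S(t)\geq 0$ in the positive semidefinite order throughout the interval of existence.

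For the upper bound I would evaluate the cost at the suboptimal choice $u\equiv 0$. Then $x(s)=\Phi(s,t)x_0$, where $\Phi$ is the state-transition matrix of $\dot{x}=Ax$, and optimality gives
\[
    x_0\transpose S(t)x_0 \leq x_0\transpose\Big[\int_t^T\Phi(s,t)\transpose Q(s)\Phi(s,t)\ud s + \Phi(T,t)\transpose S(T)\Phi(T,t)\Big]x_0.
\]
Because $A$ and $Q$ are continuous on the compact interval $[0,T]$, $\Phi$ is uniformly bounded, so the bracketed matrix is bounded by a constant depending only on the coefficients $A,Q$ and the terminal datum $S(T)$. Combined with the lower bound, this sandwiches $0\leq S(t)\leq MI$ in the L\"owner order, whence $\|S(t)\|$ is controlled by a coefficient-dependent constant, which is precisely the claimed a priori bound.

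The main obstacle I anticipate is logical rather than computational: the value-function identity presupposes that $S$ exists on all of $[t,T]$, whereas only local existence is guaranteed a priori. I would therefore carry out the argument on the maximal interval of existence, obtain the bound $0\leq S(t)\leq MI$ there, and note that this uniform bound precludes finite-time blow-up; standard continuation of ODE solutions then extends $S$, and the bound, to all of $[0,T]$. As an independent cross-check of the upper bound, one may compare $S$ with the solution $P$ of the linear Lyapunov equation $\dot{P}+Q+A\transpose P+PA=0$, $P(T)=S(T)$: the difference $D:=P-S$ satisfies $\dot{D}=-A\transpose D-DA-SR^{-1}S$ with $D(T)=0$, and conjugating by $\Phi$ shows $D(t)=\int_t^T\Phi(s,t)\transpose (SR^{-1}S)(s)\Phi(s,t)\ud s\geq 0$, so $S\leq P$ with $P$ bounded as the solution of a linear equation.
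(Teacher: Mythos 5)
The paper does not actually prove Lemma~\ref{lem:Riccati}: it is imported verbatim with citations to Jacobson (1970) and M\aa rtensson (1971), and is then used as a black box inside the proof of Theorem~\ref{thm:global_well}. Your proposal supplies a correct, self-contained proof, and it is essentially the classical argument underlying those references: identify $S(t)$ as the value-function matrix of the auxiliary LQ problem with input matrix $I$ and control weight $R=(R^{-1})^{-1}$, use the completion-of-squares identity $J(u)=x_0\transpose S(t)x_0+\int_t^T(u+R^{-1}Sx)\transpose R\,(u+R^{-1}Sx)\ud s$ to get $S(t)\geq 0$ from nonnegativity of the cost, and evaluate at $u\equiv 0$ to get $S(t)\leq \int_t^T\Phi(s,t)\transpose Q(s)\Phi(s,t)\ud s+\Phi(T,t)\transpose S(T)\Phi(T,t)$, which is uniformly bounded since $\Phi$ depends only on the continuous $A$ on the compact $[0,T]$. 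The resulting sandwich $0\leq S(t)\leq MI$ controls the spectral norm of the symmetric $S(t)$ by a constant depending only on $A,Q,S(T),T$, which is exactly the claimed a priori bound. You also correctly flag and resolve the only delicate point, namely that the value-function identity is only valid on the maximal interval of backward existence; since the bound $M$ is independent of that interval, it rules out blow-up and standard continuation extends $S$ to all of $[0,T]$ — which is precisely how the lemma is deployed in Theorem~\ref{thm:global_well}. The Lyapunov-comparison cross-check ($S\leq P$ with $P$ solving the linear equation) is also correct and gives an alternative upper bound. In short: the proposal is sound and fills in a proof the paper delegates to the literature, at the cost of some routine but unverified bookkeeping (the completion-of-squares computation is asserted rather than displayed).
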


\begin{theorem}[Global Existence and Uniqueness]
    \label{thm:global_well}
    Under the condition \(0\leq\lambda \leq r_\beta \sigma_W^2\), the ODE system~\eqref{eqn:ODE_system} has a unique global solution on \([0,T]\), for any \(T>0\).
\end{theorem}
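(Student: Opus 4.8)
The plan is to upgrade the local result of Theorem~\ref{thm:local_well} to a global one by exploiting the \emph{cascade (block-triangular) structure} of system~\eqref{eqn:ODE_system} and establishing an a priori bound that rules out finite-time blow-up. Observe that the triple $(\mu_t,\eta_t,\rho_t)$ evolves autonomously, as its right-hand side involves only $\mu,\eta,\rho$ and the given data $f_c,\BAR{v}$; once these are controlled, the pair $(\gamma_t,\theta_t)$ satisfies equations that are \emph{linear} in $(\gamma,\theta)$ with coefficients built from $(\mu,\eta,\rho)$; and finally $\dot\xi_t$ is an explicit function of the already-determined quantities, so $\xi_t$ is recovered by direct integration. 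Since $F$ is polynomial in $x$ with continuous time-dependent coefficients, it is locally Lipschitz, and by the standard continuation principle the maximal solution either reaches all of $[0,T]$ or its norm blows up; an a priori bound therefore suffices.

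The key step is to recognize the leading block as a matrix Riccati equation. Setting
\[
S(t):=\begin{pmatrix}\mu_t & \eta_t\\ \eta_t & \rho_t\end{pmatrix}\in\mathbb{S}^{2\times2},
\]
a direct matching of coefficients shows the first three equations of~\eqref{eqn:ODE_system} are equivalent to
\[
\dot S(t)+Q(t)+A(t)\transpose S(t)+S(t)A(t)-S\transpose(t)R^{-1}(t)S(t)=0,
\]
with
\[
R^{-1}=\begin{pmatrix}1/r_\alpha & 0\\ 0 & 1/r_\beta\end{pmatrix},\qquad
A(t)=\begin{pmatrix}0 & 0\\ 1 & \tfrac{\lambda}{r_\beta\sigma_W^2}f_c(t)\end{pmatrix},
\]
\[
Q(t)=\begin{pmatrix}r_v & 0\\ 0 & \tfrac{\lambda}{\sigma_W^2}\left(1-\tfrac{\lambda}{r_\beta\sigma_W^2}\right)f_c^2(t)\end{pmatrix}.
\]
I would confirm these entries by reading off, in each scalar equation, the quadratic part ($S\transpose R^{-1}S$), the $S$-linear part ($-A\transpose S-SA$), and the $S$-free forcing ($-Q$).

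With this identification I would invoke Lemma~\ref{lem:Riccati}. The hypotheses~\eqref{eqn:Riccati_condition} hold precisely because $r_\alpha,r_\beta>0$ give $R^{-1}>0$; the terminal data $\mu_T=t_v>0$, $\eta_T=\rho_T=0$ give $S(T)=\mathrm{diag}(t_v,0)\geq0$; and $Q(t)\geq0$ follows from $r_v>0$ together with the sign of the $(2,2)$ entry. This last point is exactly where the hypothesis $0\leq\lambda\leq r_\beta\sigma_W^2$ enters: the factor $\tfrac{\lambda}{\sigma_W^2}\bigl(1-\tfrac{\lambda}{r_\beta\sigma_W^2}\bigr)$ is nonnegative if and only if $\lambda\in[0,r_\beta\sigma_W^2]$. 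Lemma~\ref{lem:Riccati} then yields an a priori bound on $(\mu,\eta,\rho)$ over $[0,T]$, so this block extends globally and remains bounded and continuous.

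It remains to propagate the bound through the cascade. Treating $(\mu,\eta,\rho)$ as known bounded continuous functions, the $(\gamma,\theta)$ equations form a linear inhomogeneous ODE $\dot w=P(t)w+q(t)$ with continuous, hence bounded, $P,q$ on $[0,T]$; linear systems with bounded coefficients admit global solutions (e.g.\ by Gr\"onwall's inequality), giving an a priori bound on $(\gamma,\theta)$. Finally $\xi_t=\xi_T-\int_t^T\dot\xi_s\,\mathrm{d}s$ is bounded, since its integrand is a bounded function of $s$. Combining these bounds, the full vector $x(t)$ stays in a fixed compact set, so the maximal solution from Theorem~\ref{thm:local_well} cannot blow up and must extend to all of $[0,T]$; uniqueness is inherited from the local Picard-Lindel\"of uniqueness. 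I expect the main obstacle to be the bookkeeping in the coefficient matching of the Riccati block---in particular, pinning down the $(2,2)$ entry of $Q$ so that the well-posedness threshold $\lambda\leq r_\beta\sigma_W^2$ emerges cleanly---rather than the cascade argument, which is routine.
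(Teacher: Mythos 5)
Your proposal is correct and follows essentially the same route as the paper: the same Riccati identification of the $(\mu,\eta,\rho)$ block with the same matrices $S,Q,R^{-1},A$ (your $(2,2)$ entry of $Q$ equals the paper's $-\bigl(\tfrac{\lambda^2}{r_\beta\sigma_W^4}-\tfrac{\lambda}{\sigma_W^2}\bigr)f_c^2$), the same appeal to Lemma~\ref{lem:Riccati} using $0\leq\lambda\leq r_\beta\sigma_W^2$ to get $Q\geq 0$, and the same cascade through the linear $(\gamma,\theta)$ equations and direct integration for $\xi$. No gaps.
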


\begin{proof}
    From Theorem~\ref{thm:local_well}, global existence and uniqueness hold if the solution \(x(t)\) is {\it a priori} bounded on \([0,T]\).

We first prove the {\it a priori} boundedness of \(\mu_t\), \(\eta_t\), \(\rho_t\). Rewriting \eqref{eqn:ODE_system} in terms of
    \begin{align}
    S(t) &= \begin{bmatrix}
            \mu_t & \eta_t\\
            \eta_t & \rho_t
        \end{bmatrix},\quad
        S(T) = \begin{bmatrix}
           t_v & 0\\
           0 & 0
        \end{bmatrix},\\
        Q(t) &= \begin{bmatrix}
            r_v & 0\\
            0 & -\Big(\frac{\lambda^2}{r_\beta \sigma_W^4} - \frac{\lambda}{\sigma_W^2}\Big)f_c^2(t)
        \end{bmatrix},\\
        R^{-1}(t) &= 
        \begin{bmatrix}
            \frac{1}{r_\alpha} & 0\\
            0 & \frac{1}{r_\beta}
        \end{bmatrix},
        \quad 
        A(t) = \begin{bmatrix}
            0 & 0\\
            1 & \frac{\lambda}{r_\beta \sigma_W^2}f_c(t)
        \end{bmatrix}.
    \end{align}
    The coefficients are continuous in \(t\) and satisfy the conditions~\eqref{eqn:Riccati_condition}.
    By Lemma~\ref{lem:Riccati}, \(\mu_t\), \(\eta_t\), \(\rho_t\) are {\it a priori} bounded on \([0,T]\). Then, since \(\gamma_t\), \(\theta_t\) satisfy linear ODEs with bounded time-variant coefficients, their solutions remain {\it a priori} bounded. 
    Finally, the {\it a priori} boundedness of \(\xi_t\) follows from the integral representation in terms of {\it a priori} bounded functions \(\mu_t\), \(\eta_t\), \(\rho_t\), \(\gamma_t\), and \(\theta_t\).
    This concludes the proof.
\end{proof}



With global well-posedness established, we now examine cases when strategic misdirection is absent or SHT is trivial. As shown in Corollary~\ref{cor:baseline}, the optimal control $\hat\beta$ remains trivial when either misdirection is excluded $(\lambda =0)$ or the hypotheses in~\eqref{eqn:hypotheses} coincide $(H_0 = H_1)$. In these cases, the problem reduces to the blue team’s primary task, and the optimal strategy introduces no perturbations.

\begin{cor}
    \label{cor:baseline}
    Under the condition \(0\leq\lambda \leq r_\beta \sigma_W^2\), if \(f_c \equiv f_d \equiv 0\) or \(\lambda=0\), then the unique solution to the ODE system~\eqref{eqn:ODE_system} satisfies \(\eta\equiv\rho\equiv\theta\equiv 0\), implying that \(\hat{\beta} \equiv 0\).
\end{cor}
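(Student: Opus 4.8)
The plan is to exhibit the unique global solution guaranteed by Theorem~\ref{thm:global_well} explicitly and then read $\hat\beta$ off from~\eqref{eqn:solution}. The key observation is that both hypotheses collapse to a single algebraic condition: whether $\lambda=0$ or $f_c\equiv f_d\equiv 0$, every coefficient appearing in~\eqref{eqn:ODE_system} of the form $\lambda f_c$, $\lambda f_d$, $\lambda f_c^2$, $\lambda f_c f_d$ (and their $\lambda^2$ counterparts) vanishes identically. Under this collapsed condition I would guess the solution $\eta\equiv\rho\equiv\theta\equiv 0$, with $\mu,\gamma,\xi$ evolving according to the correspondingly reduced scalar equations, and then verify this guess solves the full system so that uniqueness pins it down.

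First I would isolate the $(\eta,\rho)$ block. Inspecting~\eqref{eqn:ODE_system}, the equations for $\dot\eta$ and $\dot\rho$ depend only on $\mu,\eta,\rho$ (not on $\gamma,\theta,\xi$), so they form a closed subsystem once the bounded function $\mu$ from the global solution is fixed. Every term of $\dot\eta$ carries a factor of $\eta$ or $\rho$, so it vanishes at $(\eta,\rho)=(0,0)$ regardless of the hypotheses. The only would-be forcing term of $\dot\rho$---the $(\frac{\lambda^2}{r_\beta\sigma_W^4}-\frac{\lambda}{\sigma_W^2})f_c^2$ term---vanishes precisely under the collapsed condition, while the remaining terms $\frac{1}{r_\alpha}\eta^2$ and $\frac{1}{r_\beta}\rho^2$ are quadratic in $(\eta,\rho)$. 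Hence the constant function $(\eta,\rho)\equiv(0,0)$ satisfies this subsystem together with the terminal data $\eta_T=\rho_T=0$. Since the right-hand side is locally Lipschitz in $(\eta,\rho)$ and $\mu$ is bounded, Picard-Lindelöf uniqueness forces $\eta\equiv\rho\equiv 0$.

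Next I would feed $\eta\equiv\rho\equiv 0$ into the $\dot\theta$ equation. Under the collapsed condition its forcing terms ($-\frac{\lambda}{r_\beta\sigma_W^2}f_d\rho$ and the $f_c f_d$ term) and its remaining terms ($\frac{1}{r_\alpha}\eta\gamma$, $\frac{1}{r_\beta}\rho\theta$, $-\frac{\lambda}{r_\beta\sigma_W^2}\theta f_c$) all vanish, leaving $\dot\theta\equiv 0$; with $\theta_T=0$ this gives $\theta\equiv 0$. The components $\mu,\gamma,\xi$ are then determined by their reduced equations, and by construction the resulting sextuple solves~\eqref{eqn:ODE_system} with the prescribed terminal conditions, so by the uniqueness in Theorem~\ref{thm:global_well} it is \emph{the} solution. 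Finally, substituting $\eta\equiv\rho\equiv\theta\equiv 0$ into~\eqref{eqn:solution} leaves $\hat\beta=\frac{\lambda}{r_\beta\sigma_W^2}\big(f_c(t)y+f_d(t)\big)$, and since $\lambda f_c\equiv\lambda f_d\equiv 0$ under the hypotheses, this vanishes, yielding $\hat\beta\equiv 0$.

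I expect the only genuine subtlety to be the logical structure rather than any computation: one must notice that $(\eta,\rho)$ decouples from the $(\gamma,\theta,\xi)$ block and that the sole nonhomogeneous term in $\dot\rho$ is exactly the one the hypotheses annihilate, so that $(0,0)$ is an honest solution of a well-posed subsystem and uniqueness applies. Everything after that is direct substitution.
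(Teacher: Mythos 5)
Your proof is correct and follows essentially the same route as the paper's: under either hypothesis the forcing terms in the equations for $\rho$ and $\theta$ vanish, so $\eta\equiv\rho\equiv\theta\equiv 0$ solves the system, and the uniqueness from Theorem~\ref{thm:global_well} identifies it as the solution, whence $\hat\beta\equiv 0$ from~\eqref{eqn:solution}. Your write-up is somewhat more careful than the paper's---isolating the closed $(\eta,\rho)$ subsystem and noting explicitly that the residual $\frac{\lambda}{r_\beta\sigma_W^2}\big(f_c(t)y+f_d(t)\big)$ term in $\hat\beta$ also vanishes under the hypotheses---but the underlying argument is identical.
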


\begin{proof}
    When \(f_c\equiv f_d\equiv 0\) or \(\lambda = 0\), the ODEs governing \(\rho_t\) and \(\theta_t\) become homogeneous, which implies that \(\eta\equiv \rho\equiv \theta\equiv 0\) is a solution to system~\eqref{eqn:ODE_system}.
    By Theorem~\ref{thm:global_well}, this solution is unique.
    Substituting their values into \eqref{eqn:solution}, we conclude that \(\hat{\beta}\equiv 0\).
\end{proof}

\section{The Stackelberg Game}\label{sec:game}

This section examines how the actual red team (as opposed to the envisioned one) strategically counters the blue team's misdirection. For technical simplicity, we present a simplified LQ model from Sec.~\ref{sec:IIC}, fixing \(f_d\equiv \BAR{v}\equiv 0\) and \(\BAR{v}_T = 0\), which implies $\gamma \equiv \theta \equiv 0$. This reduced model preserves the essence of strategic misdirection but represents it entirely through $f_c$.

The actual red team is assumed to know the governing dynamics~\eqref{eqn:velocity}--\eqref{eqn:position} but can only observe the sample paths of \(\{Y_t\}\). 
However, it has agents capable of:
\begin{itemize}
    \item Leaking the blue team's current misdirection choice \(f_c\) and the functional forms of its strategies \(\hat{\alpha},\hat{\beta}\) in \eqref{eqn:solution} (but not the ODEs governing $(\mu_t,\eta_t,\rho_t,\gamma_t,\theta_t,\xi_t)$)\footnote{Without prior knowledge of the LQ structure, the red team cannot uniquely determine the blue team’s cost functionals. This is known as the inverse reinforcement learning (IRL) problem, where a given control strategy can be optimal under multiple cost functionals \cite{ng2000algorithms}.}.
    \item Imposing a new pattern \(f_c\), which the blue team accepts.
\end{itemize}
Recognizing that the observed sample paths of \(\{Y_t\}\) contain blue team-induced perturbations, the red team aims to lure the blue team into voluntarily reducing its misdirection efforts while minimizing suspicion about its agents. 

\smallskip
\noindent\textbf{Stackelberg Game Formulation.} The red team leads the game by optimizing $f_c$, after which the blue team solves its LQ problem with SHT, under the instilled $\hat f_c$, as described in Sec.~\ref{sec:model}. This interaction forms a Stackelberg game, where the red team’s optimization problem consists of two components:
\begin{itemize}
    \item \textbf{Misdirection Control}: minimize $\E\log L_T$, using~\eqref{eqn:solution}, to push the blue team’s behavior toward $H_0$.  Confirming $H_0$ in~\eqref{eqn:hypotheses} means the blue team introduces no perturbations in $\{Y_t\}$.

    \item \textbf{Regularization}: add a penalty $\mathcal{P}(f_c)$ to model the blue team’s skepticism toward the manipulated $f_c$. Without this, minimization of $\E\log L_T$ tends to produce the trivial solution aligning exactly with $H_0$ \cite[Theorem~3]{zhou2025adversarial}, which the blue team would reject to use as it provides no misdirection.
    
    
\end{itemize}
Therefore, the red team's objective is formulated as:
    \begin{equation}\label{def:Obj-red}
        J_{\text{red}}(f_c) := \E[\log \hat{L}_T] + \frac{\lambda_{\text{reg}}}{\sigma_W^2} \mathcal{P}(f_c),
    \end{equation}
where $\hat{L}_T$ is defined in \eqref{eqn:L_t}, evaluated at $(\hat V, \hat Y)$, which follow \eqref{eqn:velocity}--\eqref{eqn:position}  under the optimal controls $(\hat\alpha, \hat \beta)$ provided in \eqref{eqn:solution}. 
Here $\lambda_{\text{reg}}>0$ characterizes the intensity of regularization.

Using \eqref{eqn:velocity}--\eqref{eqn:position}, \eqref{eqn:solution}, Itô's formula, and Fubini's theorem,
\begin{multline}
\label{eqn:red_min_obj_unreg}
   \E[\log \hat L_T] = \frac{1}{\sigma_W^2}\int_0^T \bigg[\left(-\frac{\eta_t}{r_\beta}h_{11}(t) - \frac{\rho_t}{r_\beta}h_{02}(t)\right)f_c(t) \\
    + \left(\frac{\lambda}{r_\beta \sigma_W^2}-\frac{1}{2}\right)h_{02}(t)f_c^2(t) \bigg]\ud t,
\end{multline}
where $h_{ij}(t) :=  \E [\hat{V}_t^i\hat{Y}_t^j]$ can be computed via the coupled ODE system:
\begin{equation}\label{eqn:ODE_moments}
    \begin{cases}
        \dot{h}_{20}(t) = -2\frac{\mu_t}{r_\alpha}h_{20}(t) - 2\frac{\eta_t}{r_\alpha}h_{11}(t) + \sigma_B^2, \\
        \dot{h}_{11}(t) = \left(\frac{\lambda}{r_\beta\sigma_W^2}f_c(t) - \frac{\rho_t}{r_\beta} - \frac{\mu_t}{r_\alpha}\right)h_{11}(t) \\
        \quad \quad\quad\quad + \left(1-\frac{\eta_t}{r_\beta}\right)h_{20}(t) - \frac{\eta_t}{r_\alpha}h_{02}(t), \\
        \dot{h}_{02}(t) = 2\left(1 - \frac{\eta_t}{r_\beta}\right)h_{11}(t) \\
        \quad \quad\quad\quad + 2\left(\frac{\lambda}{r_\beta\sigma_W^2}f_c(t) - \frac{\rho_t}{r_\beta}\right)h_{02}(t) + \sigma_W^2,
    \end{cases}
\end{equation}
with initial conditions
$  h_{20}(0) = \E V_0^2$, $h_{11}(0) = \E V_0Y_0$, $h_{02}(0) = \E Y_0^2$.

For the penalty term $\mathcal{P}(f_c)$, we impose a trust region constraint around the initial pattern $f_c^{\text{initial}}$. After the red team's manipulation, the blue team adopts the new $f_c$ but grows more skeptical in proportion to its deviation from $f_c^{\text{initial}}$. Such modeling ensures controlled, credible adjustments. We define
\begin{multline}\label{def:reg}
    \mathcal{P}[f_c] := \int_0^T [f_c(t)-f_c^{\text{initial}}(t)]^2 \ud t \\
    \text{ or }\int_0^T f_c^{\text{initial}}(t)\log \frac{f_c^{\text{initial}}(t)}{f_c(t)} \ud t.
\end{multline}
The first is quadratic, and the second, inspired by KL-divergence, treats \(f_c^{\text{initial}}\) and \(f_c\) as unnormalized densities on $[0,T]$. 

Since $ J_{\text{red}}(f_c)$ does not admit a closed-form minimizer, Sec.~\ref{sec:IV-B} explores three numerical approaches, including a neural network-based method, to compute the optimal $f_c$ and present the numerical results.




\section{Numerical Results}\label{sec:results}

In this section, we present numerical algorithms and experiments on deceptive and counter-deceptive strategies, focusing on the blue team's optimal misdirection  $\hat\beta_t$ (cf. Sec.~\ref{sec:model}) and the red team's optimal pattern $\hat f_c$ (cf. Sec.~\ref{sec:game}). {For algorithmic details and hyperparameters, see \cite{zhou2025adversarial}.}

\subsection{Blue Team's Optimal Control $\hat \beta_t$}\label{sec:IV-A}


We illustrate the blue team’s control problem from Sec.~\ref{sec:IIC} by plotting trajectories of velocity $\hat V_t$, observed position $\hat Y_t$, and optimal controls $\hat\alpha_t$, $\hat\beta_t$. All experiments use the parameter set
\begin{center}
    $T = 1, \sigma_B = \sigma_W = 0.25, V_0 = 2, Y_0 = 4, r_\alpha = 1,r_\beta = 10$\\
    $r_v = 1,t_v = 1,\BAR{v}_T = 1,\BAR{v}(t) = 2 - t,f_d\equiv 0.$
\end{center}
\vspace{-10pt}
\begin{figure}[!htbp]
    \centering
    \includegraphics[width=\linewidth, height = 0.22\textheight]{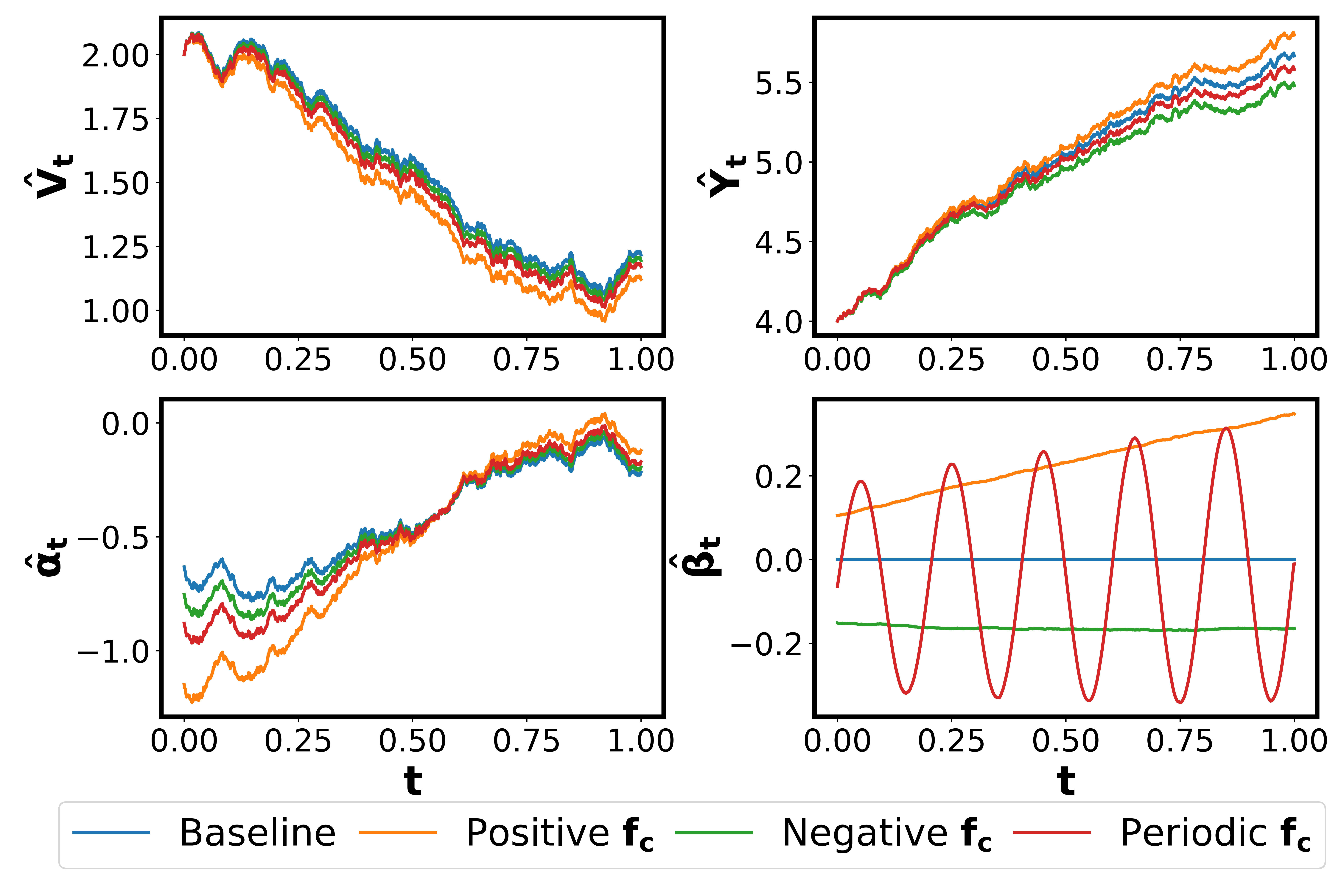}
    \caption{Comparisons of the optimal trajectories \eqref{eqn:velocity}--\eqref{eqn:position} and controls \eqref{eqn:solution} with \(\lambda = 0.075\) across different choices of \(f_c\): baseline \(f_c\equiv 0\), positive \(f_c\equiv 0.5\), negative \(f_c\equiv -0.25\), and periodic \(f_c(t) = 0.5\sin(10\pi t)\).
    }\label{fig:blue_test1}
\end{figure}
\vspace{-5pt}
Fig.\ref{fig:blue_test1} compares different $f_c$ choices. The baseline $f_c\equiv0$ corresponds to no perturbation (cf. Cor.\ref{cor:baseline}); a positive constant shifts the trajectory upward, a negative constant shifts it downward, and a periodic $f_c$ creates oscillatory deviations that can obscure intent in dynamic settings.

\begin{figure}[!htbp]
    \centering
    \includegraphics[width=\linewidth, height = 0.22\textheight]{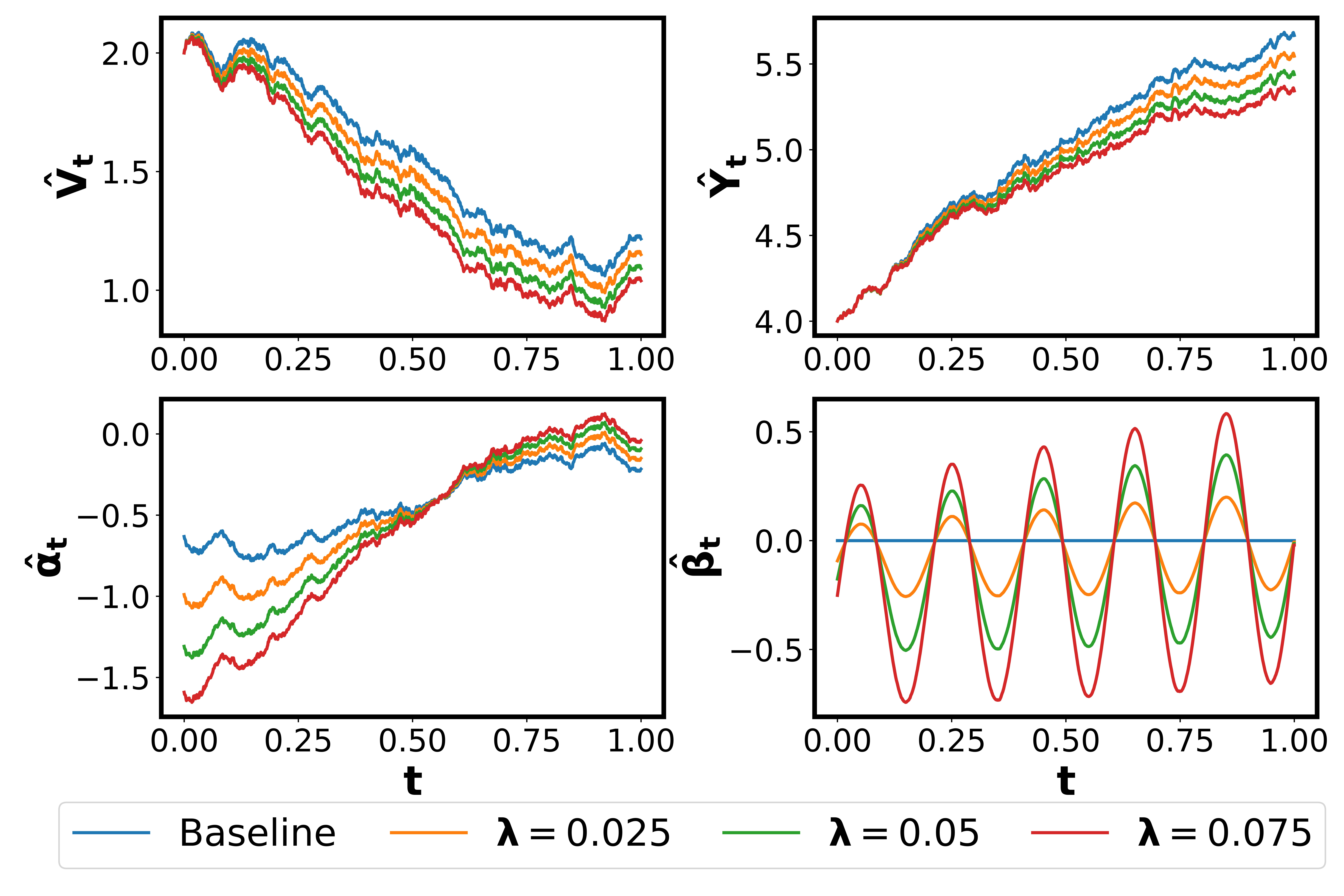}
    \caption{Comparisons of the optimal trajectories \eqref{eqn:velocity}--\eqref{eqn:position} and controls \eqref{eqn:solution} with \(f_c(t) = \sin(10\pi t)\) across different values of \(\lambda\).}
    \label{fig:blue_test2}
\end{figure}

Fig.~\ref{fig:blue_test2} varies the misdirection intensity $\lambda$ with periodic $f_c$. As $\lambda$ increases, the blue team emphasizes deception more strongly, producing larger deviations from the baseline. Monte Carlo simulations (10,000 paths) quantify the trade-off: $J^{\text{primary}}(\hat\alpha,\hat\beta)$ increases from $0.33$ at $\lambda=0$ to $0.44$ at $\lambda = 0.025$, $0.78$ at $\lambda = 0.05$ and $1.32$ at $\lambda=0.075$, while $\E[\log \hat L_T]$ improves from $-96.98$ to $-87.26, -78.14, -69.55$. Thus, a higher $\lambda$ enhances deception but reduces efficiency in accomplishing the primary task.

\subsection{Red Team's Optimal Control $\hat f_c$}\label{sec:IV-B}

To compute the red team's regularized optimal control $\hat f_c$ in \eqref{def:Obj-red}, we examine three algorithms: fixed point iteration (FPI), a neural network-based method  (NN), and the forward-backward sweep method (FBS). Experiments are run for both regularization choices in \eqref{def:reg}.

FPI starts with an initial guess $f_c^{(0)}$. At the $i^{th}$ iteration, the systems  \eqref{eqn:ODE_system} and \eqref{eqn:ODE_moments} are solved with $f_c$ replaced by $f_c^{(i)}$. The optimization in \eqref{def:Obj-red} is then performed using the obtained $(\mu_t, \eta_t, \rho_t)$ and $(h_{20}, h_{11}, h_{02})$, yielding $f_c^{(i+1)}$. The process iterates until convergence.

In the NN method \cite{han2016deep}, $f_c$ is parameterized by a feed-forward network with $t\in[0,T]$ as input. At each step, ODEs \eqref{eqn:ODE_system} and \eqref{eqn:ODE_moments} are solved by Euler schemes, and $J_{\text{red}}$ serves as the loss for training, which repeats until the network converges under a suitable norm.


The FBS method \cite[Ch.4]{lenhart2007optimal} leverages Pontryagin’s maximum principle by forming a Hamiltonian with costate (adjoint) equations. It alternates between solving the state and costate ODEs, and minimizing the Hamiltonian with respect to $f_c$. Convergence is guaranteed under the Lipschitz conditions of the ODEs \cite{mcAsey2012convergence}.


Fig.~\ref{fig:redtest9&11} presents the optimal $\hat f_c$ computed using different algorithms and penalty terms $\mathcal{P}(f_c)$. The following parameter values are used \footnote{Such choices align with the simplified LQ model, in which we have set \(f_d\equiv \BAR{v}\equiv 0\) and \(\BAR{v}_T = 0\).}:
\begin{center}
$T = 0.1, \sigma_B = \sigma_W = 0.1, V_0 = 1, Y_0 = 2, r_\alpha = 1,r_\beta = 10$\\
    $r_v = 1,t_v = 1,\BAR{v}_T = 0,\BAR{v}(t) \equiv 0,f_d\equiv 0,f_c^{\text{initial}} \equiv 1.$
\end{center}

\begin{figure}[!htbp]
    \centering
    \includegraphics[width=1.0\linewidth]{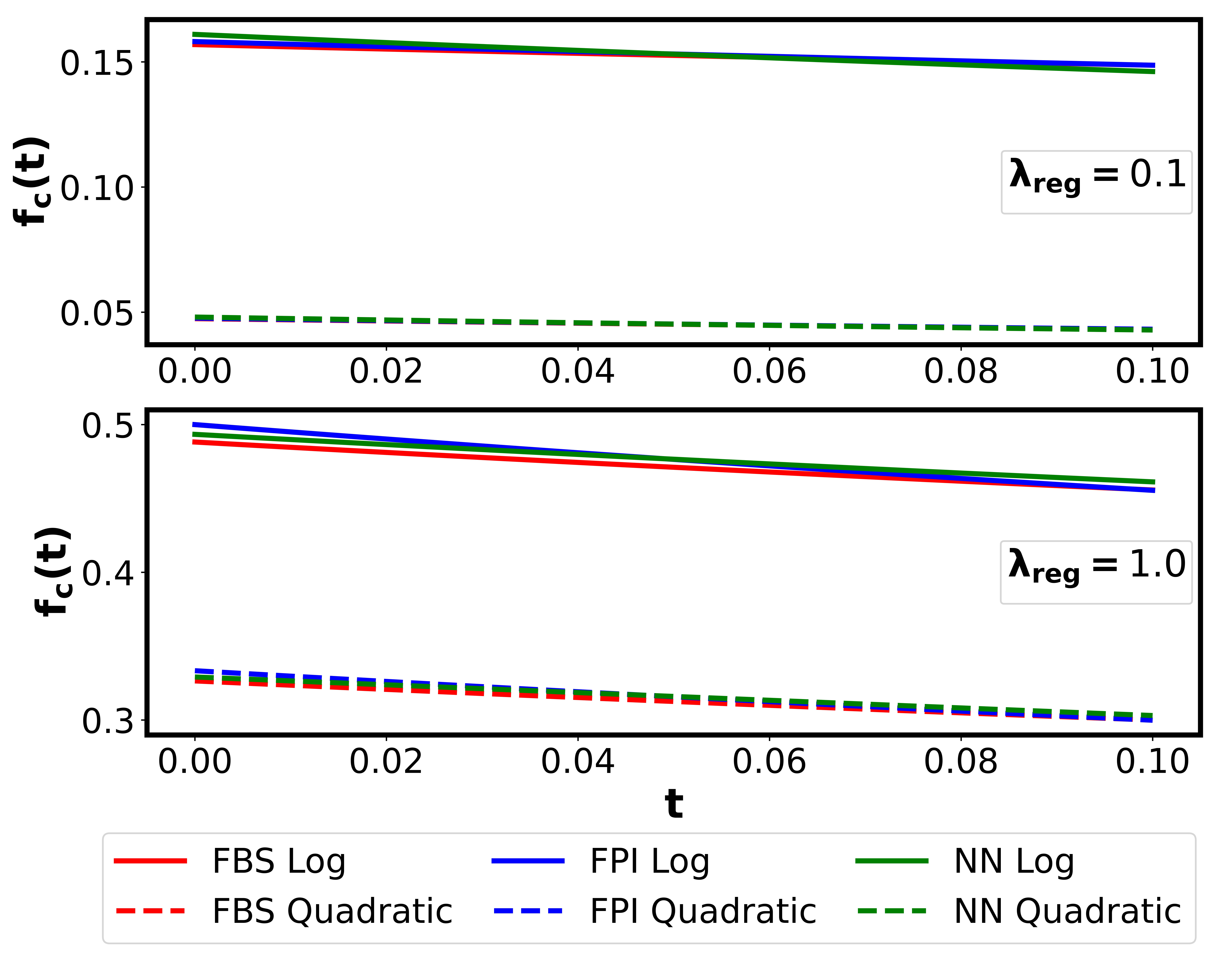}
    \caption{Plots of the optimal $\hat f_c$ across different algorithms, penalties, and values of \(\lambda_{\text{reg}}\).}
    \label{fig:redtest9&11}
\end{figure}

For both the penalty intensities $\lambda_{\text{reg}}$ and the choices of $\mathcal{P}(f_c)$, all three methods (FPI, NN and FBS) produce largely consistent results. 
Large $\lambda_{\text{reg}}$ shifts $\hat f_c$ closer to $f_c^{\text{initial}} \equiv 1$ but also raises $\E[\log \hat L_T]$, affirming the trade-off between counter-deception and avoiding skepticism. The logarithmic penalty consistently yields results closer to 1, aligning with its steeper decay near zero.

Numerically, $\E[\log \hat L_T]$ under $\hat f_c$ is sensitive to both the penalty and $\lambda_{\text{reg}}$. With the logarithmic penalty, values increase from about $0.50$ at $\lambda_{\text{reg}}=0.1$ to $5.00$ at $\lambda_{\text{reg}}=1.0$, while the quadratic penalty yields smaller increases (from $0.04$ to $2.17$). For reference, the unregularized baseline $f_c^{\text{initial}}\equiv1$ gives $\E[\log \hat L_T]=23.21$, regardless of penalty. These results confirm that by tuning $f_c$, the red team can counteract deception while controlling skepticism, thereby shaping the blue team’s misdirection strategies.


\section{Conclusions and Future Works}\label{sec:discussion}
This paper proposes a deceptive decision-making framework using a partially observable Stackelberg game. The blue team optimizes its controls to mislead the red team, while the red team strategically manipulates the blue team’s misdirection process to expose its true objectives. Using a linear-quadratic setting, we derive a semi-explicit solution and validate the effectiveness of deception-driven strategies through numerical experiments. By integrating ancient strategic principles with modern control and game theory, this work provides novel insights on deception in adversarial systems. Future research will explore extensions to nonlinear dynamics, multi-agent reinforcement learning for adaptive deception and detection, robustness against evolving adversaries, and large-scale interactions in mean-field games with common noise. These directions aim to deepen the theoretical foundations of deception-aware control and expand its applicability to complex networked systems.



\begin{thebibliography}{99}
    \bibitem{zi2007art} 
    Sun Zi, {\it The Art of War: Sun Zi's Military Methods}, Columbia University Press, 2007.
    
    \bibitem{aggarwal2016cyber}
    P. Aggarwal, C. Gonzalez and V. Dutt, Cyber-Security: Role of Deception in Cyber-Attack Detection, {\it Adv. Hum. Factors Cybersecurity, Proc. AHFE Int. Conf. Hum. Factors Cybersecurity}, July 27-31, 2016, Florida, USA, pp 85-96.

    \bibitem{arkin2011moral}
    R. C. Arkin, P. Ulam and A. R. Wagner, Moral Decision Making in Autonomous Systems: Enforcement, Moral Emotions, Dignity, Trust, and Deception, {\it Proc. IEEE}, vol. 100, 2011, pp 571-589.
    
    \bibitem{gerschlager2005deception}
    C. Gerschlager, {\it Deception in Markets: An Economic Analysis}, Springer, 2005.

    \bibitem{back2000imperfect}
    K. Back, C. Cao and G. Willard, Imperfect Competition among Informed Traders, {\it J. Finance}, vol. 55, 2000, pp 2117-2155.

    
    \bibitem{yager2008knowledge}
    R. R. Yager, A Knowledge-Based Approach to Adversarial Decision Making, {\it Int. J. Intell. Syst.}, vol. 23, 2008, pp 1-21.
    
    \bibitem{rajendran2011blue}
    J. Rajendran, V. Jyothi and R. Karri, Blue Team Red Team Approach to Hardware Trust Assessment, {\it Proc. IEEE Int. Conf. Comput. Des. (ICCD)}, 2011, pp 285-288.
    
    \bibitem{liptser2013statistics}
    R. S. Liptser and A. N. Shiryaev, {\it Statistics of Random Processes: I. General Theory}, Springer Science \& Business Media, 2013.

    \bibitem{tartakovsky2014sequential}
    A. Tartakovsky, I. Nikiforov and M. Basseville, {\it Sequential Analysis: Hypothesis Testing and Changepoint Detection}, CRC Press, 2014.
    
    \bibitem{goodman2007adaptive}
    N. A. Goodman, P. R. Venkata and M. A. Neifeld, Adaptive Waveform Design and Sequential Hypothesis Testing for Target Recognition with Active Sensors, {\it IEEE J. Sel. Top. Signal Process.}, vol. 1, 2007, pp 105-113.
    
    \bibitem{schonbrodt2017sequential}
    F. Schönbrodt, E. Wagenmakers, M. Zehetleitner and M. Perugini, Sequential Hypothesis Testing with Bayes Factors: Efficiently Testing Mean Differences, {\it Psychol. Methods}, vol. 22, 2017, pp 322.
    
    \bibitem{pham2009continuous}
    H. Pham, {\it Continuous-Time Stochastic Control and Optimization with Financial Applications}, Springer Science \& Business Media, 2009.
    
    \bibitem{wald1948optimum}
    A. Wald and J. Wolfowitz, Optimum Character of the Sequential Probability Ratio Test, {\it Ann. Math. Stat.}, 1948, pp 326-339.
    
    \bibitem{jacobson1970new}
    D. H. Jacobson, New Conditions for Boundedness of the Solution of a Matrix Riccati Differential Equation, {\it J. Differ. Equ.}, vol. 8, 1970, pp 258-263.

    \bibitem{bain2009fundamentals}
    A. Bain and D. Crisan, {\it Fundamentals of Stochastic Filtering}, Springer, 2009.
    
    \bibitem{davis1977linear}
    M. Davis, {\it Linear Estimation and Stochastic Control}, Chapman, 1977.
    
    \bibitem{teschl2012ordinary}
    G. Teschl, {\it Ordinary Differential Equations and Dynamical Systems}, American Mathematical Soc., 2012.
    
    \bibitem{horak2019solving}
    K. Horák and B. Bošanskỳ, Solving Partially Observable Stochastic Games with Public Observations, {\it Proc. AAAI Conf. Artif. Intell.}, vol. 33, 2019, pp 2029-2036.
    
    \bibitem{ma2024sub}
    O. Ma, Y. Pu, L. Du, Y. Dai, R. Wang, X. Liu, Y. Wu and S. Ji, SUB-PLAY: Adversarial Policies against Partially Observed Multi-Agent Reinforcement Learning Systems, {\it Proc. ACM SIGSAC Conf. Comput. Commun. Secur.}, 2024, pp 645-659.
    
    \bibitem{liu2022sample}
    Q. Liu, C. Szepesvári and C. Jin, Sample-Efficient Reinforcement Learning of Partially Observable Markov Games, {\it Adv. Neural Inf. Process. Syst.}, vol. 35, 2022, pp 18296-18308.
    
    \bibitem{kurniawati2009sarsop}
    H. Kurniawati, D. Hsu and W. Lee, {\it SARSOP: Efficient Point-Based POMDP Planning by Approximating Optimally Reachable Belief Spaces}, 2009.
    
    \bibitem{roy2005finding}
    N. Roy, G. Gordon and S. Thrun, Finding Approximate POMDP Solutions through Belief Compression, {\it J. Artif. Intell. Res.}, vol. 23, 2005, pp 1-40.
    
    \bibitem{kim2019pomhdp}
    S. K. Kim, O. Salzman and M. Likhachev, POMHDP: Search-Based Belief Space Planning using Multiple Heuristics, {\it Proc. Int. Conf. Autom. Plan. Sched.}, vol. 29, 2019, pp 734-744.

    \bibitem{lipp2016antagonistic}
    T. Lipp and S. Boyd, Antagonistic Control, {\it Syst. Control Lett.}, vol. 98, 2016, pp 44-48.
    \bibitem{taskesen2024distributionally}
    B. Taskesen, D. Iancu, C. Koçyiğit and D. Kuhn, Distributionally Robust Linear Quadratic Control, {\it Adv. Neural Inf. Process. Syst. (NeurIPS)}, vol. 36, 2024.
    
    \bibitem{hakobyan2024wasserstein}
    A. Hakobyan and I. Yang, Wasserstein Distributionally Robust Control of Partially Observable Linear Stochastic Systems, {\it IEEE Trans. Autom. Control}, 2024.
    
    \bibitem{moon2016linear}
    J. Moon and T. Başar, Linear Quadratic Risk-Sensitive and Robust Mean Field Games, {\it IEEE Trans. Autom. Control}, vol. 62, 2016, pp 1062-1077.
    
    \bibitem{bauso2016robust}
    D. Bauso, H. Tembine and T. Başar, Robust Mean Field Games. {\it Dyn. Games Appl.}, vol. 6, 2016, pp 277-303.
    
    \bibitem{lenhart2007optimal}
    S. Lenhart and J. Workman, {\it Optimal Control Applied to Biological Models}, Chapman, 2007.
    
    \bibitem{mcAsey2012convergence}
    M. McAsey, L. Moua and W. Han, Convergence of the Forward-Backward Sweep Method in Optimal Control, {\it Comput. Optim. Appl.}, vol. 53, 2012, pp 207-226.
    
    \bibitem{ng2000algorithms}
    A. Y. Ng and S. Russell, Algorithms for Inverse Reinforcement Learning, {\it ICML}, vol. 1, 2000, pp 2.
    
    
    \bibitem{Cook2021}
    J. A. Sharp, K. Burrage and M. J. Simpson, Implementation and Acceleration of Optimal Control for Systems Biology, {\it J. R. Soc. Interface}, vol. 18, 2021.

    \bibitem{Rose2015}
    G. R. Rose, Numerical Methods for Solving Optimal Control Problems, {\it M.Sc. Thesis}, University of Tennessee, Knoxville, 2015.

    \bibitem{mrtensson1971matrix}
    K. Mårtensson, On the Matrix Riccati Equation, {\it Inf. Sci.}, vol. 3, 1971, pp 17-49.

    \bibitem{han2016deep}
    J. Han and W. E, Deep Learning Approximation for Stochastic Control Problems, {\it arXiv Preprint arXiv:1611.07422}, 2016.

    \bibitem{zhou2025adversarial}
    H. Zhou, D. Ralston, X. Yang and R. Hu, Adversarial Decision-Making in Partially Observable Multi-Agent Systems: A Sequential Hypothesis Testing Approach, {\it In preparation}, 2025.


\end{thebibliography}
\end{document}